\def\cl{\centerline}
\def\la{\lambda}
\def\al{\alpha}
\def\be{\beta}
\def\vs{\vspace*}
\def\Z{\mathbb{Z}}
\def\C{\mathbb{C}}
\def\N{\mathbb{N}}
\numberwithin{equation}{section}
\newtheorem{theo*}{Theorem}
\newtheorem{theo}{Theorem}[section]
\newtheorem{defi}[theo]{Definition}
\newtheorem{prop}[theo]{Proposition}
\newtheorem{clai}{Claim}%[section]
\newtheorem{case}{Case}
\def\W{\mathcal{W}}
\def\V{\mathcal{V}}
\begin{document}
\begin{CJK*}{GBK}{song}

\begin{center}
{\bf\large Modules over the algebra $\V ir(a,b)$}
\footnote {$^*\,$Supported by NSF grant no. 11371278, 11431010, 11501417, %SMSTC grant no.~12XD1405000,
the Fundamental Research Funds for the Central Universities of China, Innovation Program of Shanghai Municipal Education Commission and  Program for Young Excellent Talents in Tongji University.

\ $^\dag\,$Correspondence: Q.~ Chen (Email: qiufantj@126.com)
}
\end{center}

\cl{Jianzhi Han$^{\,*}$, Qiufan Chen$^{\,\dag}$, Yucai Su$^{\,*}$}

\cl{\small Department of Mathematics, Tongji University, Shanghai 200092, China}

\vs{8pt}

{\small
\parskip .005 truein
\baselineskip 3pt \lineskip 3pt

\noindent{{\bf Abstract:} For any two complex numbers $a$ and $b$, $\V ir(a,b)$  is a central extension of $\W(a,b)$ which is universal in the case $(a,b)\neq (0,1)$, where $\W(a,b)$ is the Lie algebra with basis $\{L_n,\,W_n\mid\,n\in\Z\}$ and relations $[L_m,L_n]=(n-m)L_{m+n}$, $[L_m,W_n]=(a+n+bm)W_{m+n}$, $[W_m,W_n]=0$. In this paper, we construct and classify a class of non-weight modules over the algebra $\V ir(a,b)$ which are free $U(\C L_0\oplus\C W_0)$-modules of rank $1$. It is proved that such modules  can only exist for $a=0$.  \vs{5pt}

\noindent{\bf Key words:} the algebra $\V ir(a,b)$, the algebra $\W(a, b)$, non-weight modules, central extensions}

\noindent{\it Mathematics Subject Classification (2010):} 17B10,  17B65,  17B68.}
\parskip .001 truein\baselineskip 6pt \lineskip 6pt
\section{Introduction}
%\setcoun ter{section}{\uppercase\expandafter{\romannumeral #5@}}
 For $a,b\in\C$,  the Lie algebra $\W(a,b)$  was studied in  \cite{OC}, which has a $\C$-basis $\{L_n,\,W_n\mid n\in\Z\}$ subject to relations
\begin{eqnarray}
&&\label{rel1} [L_m,L_n]=(n-m)L_{m+n},\\
&&\label{rel2}  [L_m,W_n]=(a+n+bm)W_{m+n},\\
&&\label{rel3}  [W_m,W_n]=0, \quad \forall m,n\in\Z.
\end{eqnarray}
It is clear that $\W(a,b)\cong W\ltimes A_{a,b}$, where $W$ is the Witt algebra and $A_{a,b}$ \cite{KS} is a module of the intermediate series over $W$.
It was shown in \cite{GJP} that $\W(a,b)$ is perfect if and only if $(a,b)\neq (0,1)$, in which case the universal central extension $\mathcal{V}ir(a,b)$ of $\W(a,b)$ were also determined therein. The algebra $\V ir(a,b)$ is very meaningful in the sense that it generalizes many important algebras, such as  the so-called  Heisenberg-Virasoro algebra $\V ir(0,0)$ which plays an important role in the representation theory of toroidal Lie algebra,  the $W(2,2)\, $ algebra $(=\V ir(0,-1))$ whose representations were studied in \cite{ZD} in terms of vertex operator algebras, and the so-called spin-$l$  algebra $\W(0,l)$ for $l\in \frac{1}{2}\N$ $(\N$ = the set of all positive integers).

The full subcategory  $\mathfrak{M}$ of $U(\mathfrak{sl_{n+1}})$-modules consisting of objects whose restriction to $U(\mathfrak{h})$ are free of rank $1$ ($\mathfrak{h}$ is the standard Cartan subalgebra of $\mathfrak{sl_{n+1}}$), was investigated in \cite{N},  isomorphism classes of objects in $\mathfrak{M}$  were classified and their irreducibilities were  determined therein.   Such kind of modules for finite dimensional simple Lie algebras were studied later in \cite{N1}. And the idea  was  exploited and generalized to consider modules over infinite Lie algebras, such as the Witt algebras of all ranks \cite{TZ},  Heisenberg-Virasoro algebra and $W(2,2)$ algebra  \cite{CG},  Lie  algebras related to  Virasoro algebra \cite{CC}. In the present paper,  we shall consider a class of non-weight modules over the algebra $U\big(\V ir(a,b)\big)$ whose restriction to $U(\C L_0\oplus\C W_0)$ are free of rank 1 and classify all such kind of modules.

This paper is organized as follows. In Section 2, we construct a class of non-weight $\V ir(0,b)$-modules  over $\C[s,t]$, and study the irreducibilities and isomorphic relations of these modules. Section 3 is devoted to classifying all modules over the algebra $U\big(\W (a,b)\big)$ and then over $U\big(V ir (a,b)\big)$  whose restriction to $U(\C L_0\oplus\C W_0)$ are free of rank 1. As a coproduct, we show that such modules exist only for $\W(0,b)$ and $\V ir(0, b)$.

Throughout the paper, the symbols $\C ,\,\Z,\, \C^*,\,\Z^*$ represent for the sets of complex numbers, integers, nonzero complex numbers and nonzero integers,  respectively. $U(\mathfrak{g})$ is used to denote the universal enveloping algebra of a Lie algebra $\mathfrak{g}$.
\section{Preliminaries}
For $(a,b)\neq (0,1)$, the universal central extension $\V ir(a,b)$ of $\W(a,b)$ are  determined in \cite{GJP}, which can be divided into the following four cases.
%\begin{itemize}\parskip-3pt\item[\bf  (i):]

\textbf{Case  (i).} $\V ir(0,0)$ is the Lie algebra with the $\C$-basis $\{L_n,\,W_n,\,C_1,\,C_2,\,C_3\mid n\in\Z\}$ and the Lie brackets given by
\begin{equation*}
\aligned
&[L_n,L_m]=(m-n)L_{m+n}+\delta_{m+n,0}\frac{n^3-n}{12}C_1,\\
&[L_n,W_m]=mW_{m+n}+\delta_{m+n,0}(n^2+n)C_2,\quad [W_n,W_m]=n\delta_{m+n,0}C_3,\\
&[C_i,V ir(0,0)]=0,\quad \forall m,n\in\Z,\,i=1,2,3.
\endaligned
\end{equation*}

\textbf{Case  (ii).} $\V ir(0,-1)$ is the Lie algebra with the $\C$-basis $\{L_n,\,W_n,\,C_1,\,C_2\mid n\in\Z\}$ and the Lie brackets given by
\begin{equation*}
\aligned
&[L_n,L_m]=(m-n)L_{m+n}+\delta_{m+n,0}\frac{n^3-n}{12}C_1,\\
&[L_n,W_m]=(m-n)W_{m+n}+\delta_{m+n,0}\frac{n^3-n}{12}C_2,\\
&[W_n,W_m]=[C_i,\V ir(0,-1)]=0,\quad {\rm where}\ m,n\in\Z,\,i=1,2.
\endaligned
\end{equation*}

%This is the infinite dimensional Lie algebra $W(2,2)$, whose representations have been studied in \cite{ZD} in terms of vertex operator algebras.\\
\textbf{Case  (iii).} $\V ir(\frac{1}{2},0)$ is the Lie algebra with the $\C$-basis $\{L_n,\,W_n,\,C_1,\,C_3\mid n\in\Z\}$ and the Lie brackets given by
\begin{equation*}
\aligned
&[L_n,L_m]=(m-n)L_{m+n}+\delta_{m+n,0}\frac{n^3-n}{12}C_1,\\
&[L_n,W_m]=(m+\frac{1}{2})W_{m+n},\quad [W_n,W_m]=(2n+1)\delta_{m+n,-1}C_3\\
&[C_i,\V ir(\frac{1}{2},0)]=0,\quad {\rm where}\ m,n\in\Z,\,i=1,3.\\
\endaligned
\end{equation*}

\textbf{Case  (iv).} $\V ir(a,b)$ with $(a,b)\neq (0,0), (0,\pm 1), (\frac{1}{2},0)$ is the Lie algebra with the $\C$-basis $\{L_n,\,W_n,\,C_1\mid n\in\Z\}$ and the Lie brackets given by
\begin{equation*}
\aligned
&[L_n,L_m]=(m-n)L_{m+n}+\delta_{m+n,0}\frac{n^3-n}{12}C_1,\\
&[L_n,W_m]=(m+a+nb)W_{m+n},\\
&[W_n,W_m]=[C_1,\V ir(a,b)]=0,\quad \forall m,n\in\Z.
\endaligned
\end{equation*}
 While the Lie algebra $\V ir(0,1)$   has a $\C$-basis $\{L_n,\,W_n,\,C_1,\,C_2,\,C_4\mid n\in\Z\}$ subject to
\begin{equation*}
\aligned
&[L_n,L_m]=(m-n)L_{m+n}+\delta_{m+n,0}\frac{n^3-n}{12}C_1,\\
&[L_n,W_m]=(m+n)W_{m+n}+\delta_{m+n,0}(n C_2+C_4),\\
&[W_n,W_m]=[C_i,\V ir(0,1)]=0,\quad {\rm where}\  m,n\in\Z,\,i=1,2,4.
\endaligned
\end{equation*}

%pIt is easy to see that $\{L_n,C_1|n\in\Z\}$ span a subalgebra isomorphic to the Virasoro algebra.

Fix any $\al\in\C$ and $b\in\C$. For any nonnegative integer $k$ and $n\in\Z$, define the following polynomials
\begin{equation}\label{ew}q_{n,k;\al}(t)=nt^k-\delta_{b,-1}n(n-1)\al\frac{t^k-\al^k}{t-\al}-\delta_{b,1}n\al\frac{t^k-\al^k}{t-\al}.\end{equation}
%where\begin{eqnarray*} &r_{n,k}(t)=nt^k, \\ &p_{n,k;\al}(t)=n(n-1)\al\frac{t^k-\al^k}{t-\al},\\ &q_{n,k;\al}(t)=n\al\frac{t^k-\al^k}{t-\al}. \end{eqnarray*}
Set
\begin{equation}\label{noas}\mathcal{H}_{\al}=\big\{\big(h_n(t)\big)_{n\in\Z}\mid h_n(t)=\sum_{i=0}^{\infty}{h}^{(i)} q_{n,i;\al}(t)\in\C[t], {h}^{(i)}\in\C\big\}.\end{equation}
%Particularly,\begin{equation}\label{noas2}\mathcal{H}_{0}=\big\{\{r_n(t)_{n\in\Z}\mid r_n(t)=\sum_{i=0}^{\infty}\xi_i r_{n,i}(t)\in\C[t],\xi_i\in\C\big\}.\end{equation}

\begin{defi}\label{defi2.1}\rm Let $\C[s,t]$ be the polynomial algebra in variables $s$ and $t$ with coefficients in $\C$. For $\lambda\in\C^*,\,\al\in\C$ and $\mathbf{h}=\big(h_n(t)\big)_{n\in\Z}\in\mathcal{H}_{\al}$, define the action of $\V ir(0,b)$ on $\Phi(\lambda,\al,\mathbf {h}):=\C[s,t]$ as follows:
\begin{eqnarray*}
L_m\cdot f(s,t)&=&\lambda^m\big(s+h_m (t)\big)f(s-m,t)+bm\lambda^m\big(t-\delta_{b,-1}m\al\\ &&-\delta_{b,1}(1-\delta_{m,0})\al\big)\frac{\partial}{\partial t}\big(f(s-m,t)\big),\\
 W_m\cdot f(s,t)&=&
\lambda^m\big(t-\delta_{b,-1}m\al-\delta_{b,1}(1-\delta_{m,0})\al\big)f(s-m,t),\\
C_i\cdot f(s,t)&=&0,\quad {\rm where}\  f(s,t)\in\C[s,t],\, i=1,2,3,4.
\end{eqnarray*}   For any $\lambda\in\C^*$ and $\mathbf{r}=\big(r_n(t)\big)_{n\in\Z}\in\mathcal{H}_0$, another action of $\V ir(0,1)$  on $\Theta(\lambda,\mathbf {r}):=\C[s,t]$ is defined by
\begin{eqnarray*}
L_m\cdot f(s,t)&=&\lambda^m\big(s+h_m (t)\big)f(s-m,t)\\
W_m\cdot f(s,t)&=&\delta_{m,0}tf(s-m,t)\\
 C_i\cdot f(s,t)&=&0,\quad {\rm where}\ f(s,t)\in\C[s,t],\, i=1,2,4.
\end{eqnarray*}  %And in this case we write $\C[s,t]$ as $\Theta(\lambda,\bf {r})$. To emphasize the action depending on $\lambda,\alpha$ and $\bf h$, we write $\Phi(\lambda,\al,\bf {h})$ rather than $\C[s,t]$.
\end{defi}
\begin{prop}\label{Pro}
All $\Phi(\lambda,\al,\bf {h})$  are $\V ir(0,b)$-modules and  all $\Theta(\lambda,\bf {r})$ are $\V ir(0,1)$-modules under the actions given in Definition \ref{defi2.1}.
%Furthermore,
%\begin{itemize}
%\item[(i)] If $b=\pm1$, then $\Phi(\lambda,\al,\bf h)$ is simple if and only if $\al\neq0$ and $\Theta(\lambda,\bf {h})$ is reducible;
%\item[(ii)] If $b\neq\pm1$, then $\Phi(\lambda,0,\bf h)$ is reducible.
%\end{itemize}
\end{prop}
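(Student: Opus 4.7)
The plan is to verify, by a direct computation on $\C[s,t]$, that the prescribed action satisfies each defining bracket of $\V ir(0,b)$. Because every central generator $C_i$ is set to act as zero, the central contributions of the brackets automatically vanish on both sides, so only the non-central portion of each relation needs to be checked.

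The easy bracket is $[W_m,W_n]$: a direct computation shows that both orderings give $\lambda^{m+n}\tau_m(t)\tau_n(t)f(s-m-n,t)$, where $\tau_k(t):=t-\d_{b,-1}k\al-\d_{b,1}(1-\d_{k,0})\al$, so the bracket vanishes as required. For $[L_m,W_n]$, a short manipulation yields
\begin{equation*}
[L_m,W_n]\cdot f=\lambda^{m+n}\bigl(n\tau_n(t)+bm\tau_m(t)\tau_n'(t)\bigr)f(s-m-n,t),
\end{equation*}
which I would match against $(n+bm)W_{m+n}\cdot f$ by verifying the polynomial identity $n\tau_n+bm\tau_m\tau_n'=(n+bm)\tau_{m+n}$ in each of the cases $b=-1$, $b=0$, $b=1$, and generic $b\notin\{0,\pm1\}$, with an additional case split on whether $m$, $n$ or $m+n$ vanishes when $b=1$ (since $\tau_0(t)=t$ differs from $\tau_k(t)=t-\al$ for $k\ne 0$).

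The main content of the proof is the bracket $[L_m,L_n]$. Computing $L_mL_n\cdot f-L_nL_m\cdot f$ and collecting by powers of $f_t$ yields an expression that is at most linear in $f_t$ (the $f_{tt}$ contributions cancel by symmetry, since they come from commuting multiplication operators). Matching with
\begin{equation*}
(n-m)L_{m+n}\cdot f=\lambda^{m+n}(n-m)\bigl((s+h_{m+n}(t))f(s-m-n,t)+b(m+n)\tau_{m+n}(t)f_t(s-m-n,t)\bigr)
\end{equation*}
produces two identities. Matching the $f$-coefficient gives the cocycle identity
\begin{equation*}
n\,h_n(t)-m\,h_m(t)+bm\tau_m(t)h_n'(t)-bn\tau_n(t)h_m'(t)=(n-m)h_{m+n}(t),
\end{equation*}
and matching the $f_t$-coefficient gives $n^2\tau_n-m^2\tau_m+bmn(\tau_m\tau_n'-\tau_n\tau_m')=(n^2-m^2)\tau_{m+n}$. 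The latter is a short case-by-case verification in $t$ (using $\tau_k'\in\{0,1\}$). The cocycle identity is linear in $\mathbf{h}$, so it suffices to check it for a single basis element $h_n(t)=q_{n,k;\al}(t)$ from the parametrization \eqref{ew}; substituting the explicit formula and using $P_k(t):=\tfrac{t^k-\al^k}{t-\al}=\sum_{i=0}^{k-1}t^i\al^{k-1-i}$ together with $(t-\al)P_k'(t)+P_k(t)=kt^{k-1}$ (obtained by differentiating $(t-\al)P_k(t)=t^k-\al^k$) reduces the identity in each case of $b$ to routine polynomial cancellation.

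For the $\V ir(0,1)$-module $\Theta(\lambda,\mathbf{r})$, since $W_m$ acts as zero for $m\ne 0$ and as multiplication by $t$ for $m=0$, the brackets $[W_m,W_n]$ and $[L_m,W_n]$ are immediate: $W_0$ commutes with the multiplication-by-$t$-factor, and the other cases have both sides vanishing. The bracket $[L_m,L_n]$ reduces to the cocycle identity above with $b=0$, which holds because $\mathbf{r}\in\mathcal{H}_0$ forces $r_n(t)=n\cdot r(t)$ for a single polynomial $r(t)$, so that $nr_n-mr_m=(n^2-m^2)r=(n-m)r_{m+n}$. The main obstacle will be the bookkeeping of $\d$-corrections in $\tau_k$ and $q_{n,k;\al}$ when $b=\pm1$---in particular the interaction between $\tau_k$ and $h_n'$ in the cocycle identity, together with the extra case split needed for $b=1$---but once these polynomial identities are established, the remaining verifications are routine.
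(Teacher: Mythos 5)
Your proposal is correct and follows essentially the same route as the paper: a direct verification of each bracket on $\C[s,t]$, with the commutator $[L_m,L_n]$ reducing to the identity $nh_n-mh_m+bm\tau_m h_n'-bn\tau_n h_m'=(n-m)h_{m+n}$ (the paper's displayed relation $0=tmh_n'-tnh_m'=nh_n-mh_m-(n-m)h_{m+n}$ is exactly its specialization to $b\ne\pm1$, where $\tau_k=t$). If anything, your write-up is more complete, since the paper only carries out the computation for $b\ne\pm1$ and declares the $b=\pm1$ cases (where the $\delta$-corrections in $\tau_k$ and $q_{n,k;\al}$ actually matter) to be "similar," whereas you verify them via the identity $(t-\al)P_k'+P_k=kt^{k-1}$.
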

\begin{proof}
We only tackle with the case  $b\neq\pm1$,   the arguments for the other two cases analogue being similar.  According to the above definition, we have
\begin{equation*}
\aligned
&L_m\cdot f(s,t)=\lambda^m\big(s+h_m (t)\big)f(s-m,t)+\lambda^mbmt\frac{\partial}{\partial t}\big(f(s-m,t)\big),\\
&W_m\cdot f(s,t)=\lambda^mtf(s-m,t),\endaligned
\end{equation*}where $m\in\Z$ and $h_m (t)=\sum_{i=0}^{\infty}{h}^{(i)} mt^i$ for ${h}^{(i)}\in\C$.
By a straightforward computation, we obtain the following
\begin{eqnarray}\label{A11111}
&&L_m\cdot L_n\cdot \big(f(s,t)\big)\nonumber\\ &=&
L_m\cdot \big(\la^n(s+h_n (t))f(s-n,t)+\la^nbnt\frac{\partial}{\partial t}\big(f(s-n,t)\big)\nonumber\\
&=&\la^{m+n}\Big\{(s+h_m (t))(s-m+h_n (t))f(s-m-n,t)+ bnt(s+h_m (t))\frac{\partial}{\partial t}\big(f(s-m-n,t)\big)\nonumber\\
&& \ \ \ \ \ \ \ \ \ +\,bmtf(s-m-n,t)h^\prime_n (t)+bmt(s-m+h_n (t))\frac{\partial}{\partial t}\big(f(s-m-n,t)\big)\nonumber\\
&&\ \ \ \ \ \ \ \ \ +\,b^2mnt\frac{\partial}{\partial t}\big(f(s-m-n,t)\big)+
b^2mnt^2\frac{\partial^2}{\partial^2 t}\big(f(s-m-n,t)\big)\Big\},\\
\label{A111112}
&&L_m\cdot  W_n\cdot \big(f(s,t)\big)\nonumber\\ &=&
L_m\cdot \big(\la^nt(f(s-n,t)\big)\\
&=&\la^{m+n}\Big\{(s+h_m (t))tf(s-m-n,t)+bmtf(s-m-n,t)+bmt^2\frac{\partial}{\partial t}\big(f(s-m-n,t)\big)\Big\},\nonumber\\
\label{A111113}
&&W_n\cdot L_m\cdot \big(f(s,t)\big)\nonumber\\ &=&
W_n\cdot \big(\lambda^m\big(s+h_m (t)\big)f(s-m,t)+\lambda^mbmt\frac{\partial}{\partial t}\big(f(s-m,t)\big)\big)\nonumber\\
&=&\la^{m+n}\Big\{t(s-n+h_m (t))f(s-m-n,t)+bmt^2\frac{\partial}{\partial t}\big(f(s-m-n,t)\Big\},\\
\label{eq-alpha}
&&W_m\cdot W_n\cdot \big(f(s,t)\big)=\la^{m+n}t^2f(s-m-n,t).
\end{eqnarray}

%L_m L_n\big(f(t,s)\big)=L_m\big(\la^n(s+h_n (t))f(t,s-n)+bn\la^nt\frac{\partial}{\partial t}\big(f(t,s-n)\big)
%=\la^{m+n}(s+h_m (t))(s-m+h_n (t))f(t,s-m-n)+\la^{m+n}bnt(s+h_m (t))\frac{\partial}{\partial t}\big(f(t,s-m-n)\big)
%+\la^{m+n}bmtf(t,s-m-n)\frac{\partial}{\partial t}\big(h_n (t)\big)+\la^{m+n}bmt(s-m+h_n (t))\frac{\partial}{\partial t}\big(f(t,s-m-n)\big)
%+b^2mnt\frac{\partial}{\partial t}\big(f(t,s-m-n)\big)+b^2mnt^2\frac{\partial^2}{\partial^2 t}\big(f(t,s-m-n)\big)

It follows from  \eqref{A11111} that
\begin{eqnarray*}
&&[L_m, L_n]\cdot \big(f(s,t)\big)\nonumber\\
&=&
\la^{m+n}\Big\{\big((s+h_m (t))(s-m+h_n (t))-(s+h_n (t))(s-n+h_m (t))\big)f(s-m-n,t)\nonumber\\
&&\ \ \ \ \ \ \ \ \
+\,
bt\big(n(s+h_m (t)-m(s+h_n (t)\big)\frac{\partial}{\partial t}\big(f(s-m-n,t)\big)\nonumber\\
&&\ \ \ \ \ \ \ \ \
+\,
bt\big(m\big(h_n^{'} (t)\big)-n\big(h_m^{'} (t)\big)\big)f(s-m-n,t)\nonumber\\
&&\ \ \ \ \ \ \ \ \
+\,
bt\big(m(s-m+h_n (t))-n(s-n+h_m (t))\big)\frac{\partial}{\partial t}\big(f(s-m-n,t)\big)\Big\}\nonumber\\
&=&(n-m)\la^{m+n}\big(s+h_{m+n} (t)\big)f(s-m-n,t)+(n-m)(n+m)bt\frac{\partial}{\partial t}\big(f(s-m-n,t)\big)\nonumber\\
&=&(n-m)L_{m+n}\cdot\big(f(s,t)\big),
\end{eqnarray*}
where the second equality follows from
$$0=tmh_n^{'}(t)-tnh_m^{'}(t)=nh_n(t)-mh_m(t)-(n-m)h_{m+n}(t).$$

Subtracting \eqref{A111112} from \eqref{A111113} gives rise to
\begin{eqnarray*}
\!\!\!\!\!\!
[L_m,W_n]\cdot \big(f(s,t)\big)&\!\!\!=\!\!\!&
(n+bm)\la^{m+n}tf(s-m-n,t)\nonumber\\
&\!\!\!=\!\!\!&(n+bm)W_{m+n}\cdot\big(f(s,t)\big).
\end{eqnarray*}
Finally, $[W_m,W_n]\cdot \big(f(s,t)\big)=0$ follows from \eqref{eq-alpha}, completing the proof.
\end{proof}
The following proposition gives a characterization of  two $\V ir(0,b)$-modules constructed above being isomorphic.
\begin{prop}
(i) If $b=\pm1$, then
\begin{eqnarray}\label{xxit11}&\Phi(\lambda,\al,{\bf h})\cong \Phi(\mu,\be,{\bf e})\Longleftrightarrow \lambda=\mu,\al=\be,{\bf h}={\bf e},\\
\label{xxit2}&\Theta(\lambda,{\bf r})\cong\Theta(\mu,{\bf w})\Longleftrightarrow \lambda=\mu,{\bf r}={\bf w},\end{eqnarray} $\Phi(\lambda,\al,\bf h)$ is irreducible if and only if $\al\neq0$ and $\Theta(\lambda,\bf {r})$ is reducible;

(ii) If $b\neq \pm1$, then \begin{equation}\label{xxit--22}\Phi(\lambda,\al,{\bf h})\cong \Phi(\lambda,0,{\bf e}), \quad\Phi(\lambda,0,{\bf h})\cong \Phi(\mu,0,{\bf e})\Longleftrightarrow \lambda=\mu,{\bf h}={\bf e}\end{equation} and $\Phi(\lambda,0,\bf h)$ is reducible.

\end{prop}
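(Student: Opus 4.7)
The first move is to exploit the rank-one freeness over $U(\C L_0\oplus\C W_0)$.  In every module of the family, $L_0$ acts on the generator $1\in\C[s,t]$ by multiplication by $s$ (since $h_0(t)=0$ as $q_{0,i;\al}=0$, and the derivative term vanishes) and $W_0$ by multiplication by $t$, so any $\V ir(0,b)$-homomorphism $\phi$ between two such modules must satisfy $\phi(f)=fg$ for $g:=\phi(1)\in\C[s,t]$; bijectivity of $\phi$ forces $g\in\C^*$.  Taking $\phi(1)=c\in\C^*$, the intertwining of $W_m$ gives
\begin{equation*}
\la^m\bigl(t-\delta_{b,-1}m\al-\delta_{b,1}(1-\delta_{m,0})\al\bigr)=\mu^m\bigl(t-\delta_{b,-1}m\be-\delta_{b,1}(1-\delta_{m,0})\be\bigr)
\end{equation*}
for every $m\in\Z$: at $m=1$ the coefficients of $t$ force $\la=\mu$ and (for $b=\pm 1$) the constants force $\al=\be$; the analogous intertwining of $L_m$ then reduces to $h_m(t)=e_m(t)$, giving $\mathbf{h}=\mathbf{e}$.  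The $\Theta(\la,\mathbf{r})$ case is parallel, with $W_m\cdot 1=\delta_{m,0}t$ carrying no parameter data and $L_m$ recovering $\la$ and $\mathbf{r}$.  Part~(ii) is immediate once one observes that when $b\neq\pm 1$ both Kronecker-deltas vanish and $q_{n,k;\al}(t)=nt^k$, so neither action sees $\al$ and $\mathcal{H}_\al=\mathcal{H}_0$; hence $\Phi(\la,\al,\mathbf{h})\cong\Phi(\la,0,\mathbf{h})$, and the criterion between two $\al=0$ modules is the same as in Part~(i).

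The reducibility statements --- Part~(i) with $\al=0$, and Part~(ii) --- follow from a direct check that $t\C[s,t]$ is a proper nonzero $\V ir(0,b)$-submodule: both $L_m\cdot(tg)$ and $W_m\cdot(tg)$ retain the factor of $t$ precisely because $\al=0$ kills the additive $\al$-term in the $W_m$-formula.  The same subspace works in $\Theta(\la,\mathbf{r})$, where $W_m\cdot f$ is either a multiple of $t$ (for $m=0$) or zero.

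For the irreducibility claim ($b=\pm 1$, $\al\neq 0$), it suffices to show that any nonzero submodule $V$ contains a nonzero scalar, after which closure under $L_0,W_0$ gives $V=\C[s,t]$.  Given $0\neq f\in V$, the plan is to apply composites such as $W_{-m}W_mf=(t^2-m^2\al^2)f$ in the $b=-1$ case (and analogous relations for $b=1$) and to take differences across varying $m$, exploiting pairwise coprimality of the linear factors $\{t-m\al\}_{m\in\Z}$ --- valid precisely because $\al\neq 0$ --- together with the $L_m$-action to recursively reduce the bidegree $(\deg_s f,\deg_t f)$ of $f$ down to $(0,0)$.  The main obstacle is this last step: converting qualitative coprimality into an algorithmic reduction that respects both the $s$-shift and the $t$-multiplication built into each $W_m$, with an extra twist in the $b=1$ case since there all $W_m$ with $m\neq 0$ carry the same factor $t-\al$, forcing the argument to lean more heavily on the contrast with $W_0$.
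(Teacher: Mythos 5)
Your treatment of the isomorphism criteria is correct and follows essentially the same route as the paper: use the rank-one freeness to write $\varphi(u(s,t))=u(s,t)\varphi(1)$, then read off $\la=\mu$, $\al=\be$ from $W_m\cdot 1$ and $\mathbf{h}=\mathbf{e}$ from $L_m\cdot 1$. Your observation for part (ii) that when $b\neq\pm1$ the action and the space $\mathcal{H}_\al$ are literally independent of $\al$ (so $\Phi(\la,\al,\mathbf{h})=\Phi(\la,0,\mathbf{h})$ on the nose) is the right one. For reducibility you do something more self-contained than the paper: you exhibit the explicit proper submodule $t\C[s,t]$ (valid whenever $\al=0$, and for $\Theta(\la,\mathbf{r})$), whereas the paper simply delegates all reducibility and irreducibility statements to the reference [CG] (Chen--Guo). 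Your submodule checks out: the only term in $L_m\cdot(tg)$ that could lose the factor $t$ is $bm\la^m t\,\partial_t(tg(s-m,t))$, and it visibly keeps it.

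The genuine gap is the irreducibility of $\Phi(\la,\al,\mathbf{h})$ for $b=\pm1$ and $\al\neq0$, which is half of an ``if and only if'' in the statement. You outline a strategy --- use $W_{-m}W_m f=(t^2-m^2\al^2)f$ and coprimality of the factors $t-m\al$ to drive a nonzero element of a submodule down to a scalar --- and then explicitly concede that converting this into an actual degree-reduction compatible with the $s$-shifts is ``the main obstacle.'' As written, this is a plan, not a proof: nothing is established about how the $\deg_s$ component is reduced (the $W_m$ operators never lower $\deg_s$, they only shift $s$, so one must bring in the $L_m$ or take suitable differences over $m$ to kill the top $s$-degree coefficient), and in the $b=1$ case, where every $W_m$ with $m\neq0$ carries the same factor $t-\al$, the coprimality input degenerates and the argument as sketched does not get off the ground. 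The paper avoids this entirely by citing [CG], where these irreducibility results are proved; if you want a self-contained argument you must actually carry out the reduction, e.g.\ first showing that a nonzero submodule contains a nonzero element of $\C[t]$ (using differences of $\la^{-m}L_m$ or $\la^{-m}W_m$ applied to a fixed $f$ to eliminate the $s$-dependence) and only then invoking the coprime linear factors to reach a nonzero constant.
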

\begin{proof} Statements concerning reducibility and irreducibility  follow directly from \cite{CG}. We only prove  \eqref{xxit11} for the case $b=1$,  similar argument can be applied to \eqref{xxit11} for the case $b=-1$,  \eqref{xxit2} and \eqref{xxit--22}. For this, it suffices to show the $``\Longrightarrow"$ part.
Let $\varphi:\Phi(\lambda,\al,{\bf h})\to\Phi(\mu,\be,{\bf e})$ be a  isomorphism of $\V ir(0,1)$-modules. Then  for any  $f(t)\in\C[t]$ and  $m\in\Z$,  we have \begin{eqnarray*}\label{xx}&\varphi(f(t))=\varphi(f(W_0)\cdot1)=f(W_0)\cdot\varphi(1)=f(t)\varphi(1),\\
& \lambda^m\varphi\big(t-(1-\delta_{m,0})\al\big)=\varphi(W_m\cdot1)=W_m\cdot \varphi(1)=\mu^m(t-(1-\delta_{m,0})\be)\varphi(1)\\
&{\rm and}\quad\label{xxp}\la^m\varphi(s+h_m(t))=\varphi(L_m\cdot1)=L_m\cdot \varphi(1)=\mu^m(s+e_m(t))\varphi(1).\end{eqnarray*}It is easy to see from the first two formulas above that   $\lambda=\mu$ and $\al=\be$.  Taking $m=1$ in the third formula one has  $\varphi(s)=s\varphi(1)$, which together with the first and third formulas gives rise to $h_m(t)=e_m(t)$ for all $m\in\Z$, i.e., ${\bf h}={\bf e}$.
\end{proof}

\section{Main result}
The main result of the present paper is to classify all modules over $U\big(\V ir(a,b)\big)$ whose restriction to $U(\C L_0 \oplus\C W_0)$ are
free of rank $1$. But we first classify such modules for $\W(a,b)$,    that is the following reslut.
\begin{theo}\label{theo1}Suppose that there exists  a $\W(a,b)$-module $M$ such that it is a free  $U(\C L_0 \oplus\C W_0)$-module of
 rank $1$. Then $a=0$, $M\cong \Phi(\lambda,\al,{\bf h})$ or $\Theta(\lambda,\bf {r})$ if $b=1$, and
 $M\cong\Phi(\lambda,\al,{\bf h})$  if $b\neq 1$ for some $\alpha\in\C, \lambda\in \C^*$ and ${\bf h}=\big(h_n(t)\big)_{n\in\Z}\in\mathcal{H}_\alpha$,\,${\bf r}=\big(r_n(t)\big)_{n\in\Z}\in \mathcal{H}_0$.
\end{theo}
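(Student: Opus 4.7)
The plan is as follows. Let $v_0$ be a generator of the free rank-one $U(\C L_0\oplus\C W_0)$-module $M$, and for each $m\in\Z$ set $g_m:=L_m v_0$, $w_m:=W_m v_0$, each uniquely expressed in a fixed PBW basis of $U(\C L_0\oplus\C W_0)$ (when $a\neq 0$ this is the enveloping algebra of the non-abelian two-dimensional Lie algebra $[L_0,W_0]=aW_0$). Iterating the relations $[W_m,L_0]=-(a+m)W_m$ and $[W_m,W_0]=0$ yields the commutation identity $W_m L_0^p W_0^q=(L_0-a-m)^p W_0^q W_m$ inside $U(\W(a,b))$, which applied to $v_0$ produces
\[
W_m\cdot(u v_0)=\sigma_m(u)\,w_m,\qquad u\in U(\C L_0\oplus\C W_0),
\]
where $\sigma_m$ is the algebra automorphism $L_0\mapsto L_0-a-m$, $W_0\mapsto W_0$. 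A parallel but more delicate computation for $L_m$ yields an operator identity involving both a shift and a first-order derivation, reflecting $[L_m,W_0]=(a+bm)W_m$.

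The next task is to force $a=0$. The identity above applied to $W_m W_n v_0=W_n W_m v_0$ gives $\sigma_m(w_n)\,w_m=\sigma_n(w_m)\,w_n$ in $U(\C L_0\oplus\C W_0)$ for every $m,n\in\Z$; combining this with the Step-I translations of $[L_m,W_n]=(a+n+bm)W_{m+n}$ and $[L_m,L_n]=(n-m)L_{m+n}$ yields a compatibility system on the PBW coefficients of the $w_m$'s and $g_m$'s. A careful leading-order analysis in the PBW filtration of $U(\C L_0\oplus\C W_0)$ will show that this system is inconsistent when $a\neq 0$: the shifts introduced by $\sigma_m$ cannot be matched across all $m,n$ without forcing $w_m=0$ for every $m\neq 0$, which in turn contradicts freeness of $W_0 v_0$ upon applying the bracket $[L_{-m},W_m]=(a+(1-b)m)W_0$ to $v_0$.

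Once $a=0$ is established, $U(\C L_0\oplus\C W_0)\cong\C[s,t]$ becomes commutative and $M$ is identified with $\C[s,t]$. The compatibility equation reduces to $w_n(s-m,t)w_m(s,t)=w_m(s-n,t)w_n(s,t)$, and together with $[L_0,W_m]=mW_m$ this forces each $w_m$ to depend only on $t$; writing $w_m=\lambda^m F_m(t)$, the bracket $[L_m,W_n]=(n+bm)W_{m+n}$ produces a recursion whose solutions are exactly $F_m(t)=t-\delta_{b,-1}m\alpha-\delta_{b,1}(1-\delta_{m,0})\alpha$ (yielding the modules $\Phi(\lambda,\alpha,\mathbf{h})$), or the degenerate $F_m(t)=\delta_{m,0}t$ available only when $b=1$ (yielding $\Theta(\lambda,\mathbf{r})$). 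Separating $s$-degrees in $[L_m,L_n]=(n-m)L_{m+n}$ then gives $g_m=\lambda^m(s+h_m(t))$ supplemented by the $bm\lambda^m(t-\cdots)\partial_t$ derivation term coming from Step I, with $(h_m)_{m\in\Z}$ satisfying precisely the functional equation that defines $\mathcal{H}_\alpha$ in \eqref{ew}--\eqref{noas}. The action on an arbitrary $f(s,t)\in M$ is then recovered from the Step-I operator formulas and matches Definition \ref{defi2.1}.

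The crux of the argument is the elimination of $a\neq 0$: the functional equation $\sigma_m(w_n)w_m=\sigma_n(w_m)w_n$ over a non-commutative PBW algebra admits many formally consistent local solutions, and excluding all of them simultaneously for $(m,n)\in\Z^2$ — by combining the $W_mW_n$-commutativity with the full web of $[L_m,W_n]$ relations — is the main technical obstacle.
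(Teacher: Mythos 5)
Your setup coincides with the paper's: fix the generator $v_0=1$, write $w_m=W_m\cdot 1$ and $g_m=L_m\cdot 1$ inside $U(\C L_0\oplus\C W_0)$, use the commutation identities $W_mL_0^p=(L_0-a-m)^pW_m$ and $L_mW_0^q=W_0^qL_m+q(a+bm)W_0^{q-1}W_m$ to reduce the whole module structure to these two families, and then translate the defining brackets into functional equations for them. The genuine gap is that the central claim $a=0$ is never proved: you explicitly defer it as ``the main technical obstacle'' and only assert that a ``careful leading-order analysis \dots will show'' inconsistency for $a\neq 0$. Moreover, the mechanism you sketch is not the one that works. The twisted commutativity $\sigma_m(w_n)w_m=\sigma_n(w_m)w_n$ is satisfied by $w_m=W_0$ for every $m$ regardless of the value of $a$, so no leading-order analysis of that relation alone can force $w_m=0$; and your later appeal to $[L_0,W_m]$ to control the $L_0$-dependence of $w_m$ is empty, since that bracket is automatically satisfied once your Step-I operator formulas hold. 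The paper's elimination of $a$ is short and concrete: one first shows from $W_mW_n\cdot 1=W_nW_m\cdot 1$, by comparing top coefficients in $L_0$, that each $w_m$ lies in $\C[W_0]$; then $[L_m,W_n]\cdot 1=(a+n+bm)W_{m+n}\cdot 1$ becomes
\[
(a+n+bm)\,a_{m+n}(W_0)=a_n(W_0)\bigl(g_m(L_0,W_0)-g_m(L_0-a-n,W_0)\bigr)+(a+bm)\,a_n'(W_0)\,a_m(W_0),
\]
which forces $\deg_{L_0}g_m\le 1$, and the specialization $m=0$ (using $g_0=L_0$, $a_0=W_0$) yields $a\,a_n'(W_0)W_0=0$, whence $a=0$ upon taking $n=0$. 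Until you supply an argument of this kind, the theorem's main assertion is unproven.

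Beyond this, several later steps are asserted rather than derived. The claim that the recursion coming from $[L_m,W_n]=(n+bm)W_{m+n}$ has \emph{exactly} the solutions $F_m(t)=t-\delta_{b,-1}m\alpha-\delta_{b,1}(1-\delta_{m,0})\alpha$ conceals a real case analysis: the paper must rule out a spurious root $k=\frac{1-b}{b}$ of a quadratic arising from the $n=\pm m=\pm1$ specializations, and must treat separately the values $b=-\frac{1}{m_0}$ at which the recursion's leading coefficient $1+bm$ vanishes. Likewise, membership of $(h_m)_{m\in\Z}$ in $\mathcal H_\alpha$ is not a matter of ``satisfying the functional equation that defines $\mathcal H_\alpha$'': the set $\mathcal H_\alpha$ is defined as the span of the polynomials $q_{n,k;\alpha}$ of \eqref{ew}, and one must prove, by downward induction on $\deg_{W_0}$ using that the $q_{n,k;\alpha}$ themselves solve the recursion \eqref{ggg}, that every solution is such a combination. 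These portions are fillable along the paper's lines, but as written your proposal is a plan rather than a proof.
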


\begin{proof} Let $M$ be a $\W(a,b)$-module which is a free  $U(\C L_0 \oplus\C W_0)$-module of
 rank $1$. Then $M=U(\C L_0 \oplus\C W_0)$. We divide the proof into several claims.

Formulas in the following claim can be easily shown by proceeding by induction on $i$,  which will be useful  in what follows.
\begin{clai}\label{claim1}
For any $0\le i\in\Z$ and $m\in\Z$, we have
\begin{eqnarray*}
&W_m W_0^i=W_0^iW_m, \\
&W_m L_0^i=(L_0-(a+m))^iW_m,\\
&L_m L_0^i=(L_0-m)^iL_m,\\
&L_m W_0^i=W_0^iL_m+i(a+bm)W_0^{i-1}W_m.
\end{eqnarray*}

\end{clai}
For any $m\in\Z$, assume that $g_m(L_0,W_0)=L_m\cdot1$ and  $a_m(L_0,W_0)=W_m\cdot1$ for some $g_m(L_0,W_0), a_m(L_0,W_0)\in U(\C L_0\oplus \C W_0)$. Next we are going to show that the action of $L_m$ and $W_m$ on $M$ is completely determined by $g_m(L_0,W_0)$ and  $a_m(L_0,W_0)$, respectively. Take any $u(L_0, W_0)=\sum_{i,j\geq 0}a_{i,j}L_{0}^i W_{0}^j\in U(\C L_0\oplus \C W_0)$. Then by using Claim \ref{claim1} we have
\begin{eqnarray}\label{A111112d}
&\!\!\!\!\!\!&
L_m\cdot u(L_0, W_0)\nonumber\\
&\!\!\!=\!\!\!&
L_m\cdot \sum_{i,j\geq0}a_{i,j}L_{0}^i W_{0}^j\nonumber\\
&\!\!\!=\!\!\!&\sum_{i,j\geq0}a_{i,j}(L_0-m)^iL_m\cdot W_{0}^j\nonumber\\
&\!\!\!=\!\!\!&\sum_{i,j\geq0}a_{i,j}(L_0-m)^i\big(W_{0}^jg_m(L_0,W_0) +j(a+bm)W_{0}^{j-1}a_m(L_0,W_0)\big)\nonumber\\
&\!\!\!=\!\!\!&u(L_0-m, W_0)g_m(L_0,W_0)+(a+bm)\frac{\partial}{\partial W_0}\big(u(L_0-m, W_0)\big) a_m(L_0,W_0),
\end{eqnarray}
and
\begin{eqnarray}\label{A111112c}
\!\!\!\!\!\!
W_m\cdot u(L_0, W_0)&\!\!\!=\!\!\!&
W_m\cdot \sum_{i,j\geq0}a_{i,j}L_{0}^i W_{0}^j\nonumber\\
&\!\!\!=\!\!\!&\sum_{i,j\geq0}a_{i,j}\big(L_0-(a+m)\big)^iW_m\cdot W_{0}^j\nonumber\\
&\!\!\!=\!\!\!&u(L_0-a-m, W_0)a_m(L_0,W_0).
\end{eqnarray}
Hence in order to finish the proof it suffices to determine $a_m(L_0,W_0)$ and $g_m(L_0,W_0)$ for all $m\in\Z$.
\begin{clai}\label{claim3}For all $m\in\Z$, $a_m(W_0):=a_m(L_0, W_0)\in \C[W_0]$. Moreover, if there exists $a_m(L_0, W_0)=0$ for some $m\in\Z$, then  $(a,b)=(0,1)$ and in this case $a_n(W_0)=\delta_{n,0}W_0$  for all $n\in\Z$.
\end{clai}
For the case $(a,b)=(0,1)$, if there exists some $m_{0}\in\Z^*$ such that $a_{m_0}(L_0,W_0)=0$, then using $[L_n,W_{m_0}]=(n+m_0)W_{n+m_0} $ one has
$a_m(L_0,W_0)=0$ for all $m\in\Z^*$.

It remains to show that $a_m(L_0, W_0)\neq 0$ for any $m\in\Z$ in the case $(a,b)\neq (0,1)$.
 Suppose on the contrary that $a_m(L_0,W_0)=0$ for some $m\in\Z^*$.  Set $I_m=\{n\in\Z\mid a+bn+m\neq 0\}$. Note that $I_m$ is an infinite set. It follows from  Claim \ref{claim1} that  $W_m\cdot M=0$,  which together with \eqref{rel2} gives rise to
$W_{m+n}\cdot M=0$ for all $n\in I_m$. Then by \eqref{rel2} again, we have $$0=[L_{-m-n}, W_{m+n}]=(a-b(m+n)+m+n)W_0.$$ Now this and the infinity of $I_{m}$ imply $a_0(L_0,W_0)=0$, which is absurd.  Now we can assume $a_m(L_0,W_0)=\sum_{i=0}^{k_m}b_{m,i}L_0^i$ for some $b_{m,i}=b_{m,i}(W_0)\in\C[W_0]$ and $b_{m,k_m}\neq 0$.
Note that \begin{eqnarray*}
\!\!\!\!\!\!
0&\!\!\!=\!\!\!&
W_m\cdot W_n\cdot1-W_n\cdot W_m\cdot1\nonumber\\
&\!\!\!=\!\!\!&\sum_{i=0}^{k_n}b_{n,i}(L_0-(a+m))^i\sum_{i=0}^{k_m}b_{m,i}L_0^i-\sum_{i=0}^{k_m}b_{m,i}(L_0-(a+n))^i\sum_{i=0}^{k_n}b_{n,i}L_0^i\nonumber\\
&\!\!\!\equiv\!\!\!&b_{m,k_m}b_{n,k_n}((a+n)k_m-(a+m)k_n)L_0^{k_n+k_m-1}\quad \big({\rm mod}\oplus_{i=0}^{k_n+k_m-2}\C[W_0] L_0^i\big).
\end{eqnarray*}
Hence $k_m=0$ for any $m\in\Z$, i.e., $a_m(L_0,W_0)\in \C[W_0]$.

\begin{clai}
$a=0.$
\end{clai}

Combing the relation \eqref{rel2} with \eqref{A111112d}-\eqref{A111112c} and Claim \ref{claim3}, we obtain
\begin{eqnarray}\label{A111112de}
\!\!\!\!\!\!
(a+n+bm)a_{m+n}(W_0)&\!\!\!=\!\!\!&
a_n(W_0)\big(g_m(L_0,W_0)-g_m(L_0-a-n,W_0)\big)\nonumber\\
&\!\!\!\!\!\!\!\!\!\!\!\!\!\!\!\!\!\!\!\!\!\!\!\!&
+
(a+bm)a_n^\prime(W_0) a_m(W_0).
\end{eqnarray}
Thus we must have  ${\rm deg}_{L_0}(g_m(L_0,W_0)-g_m(L_0-a-n, W_0))=0$, which in turn requires ${\rm deg}_{L_0}g_m(L_0,W_0)\leq1$ for all $m\in\Z$. These entail us to assume that \begin{equation}\label{eqq}g_m(L_0,W_0)=b_mL_0+d_m,\end{equation} for any $m\in\Z$ and  some $b_m, d_m\in \C[W_0]$. Substituting \eqref{eqq} into \eqref{A111112de} one has
\begin{eqnarray}\label{eq3}
&&(a+n+bm)a_{m+n}(W_0)\nonumber\\
&=&(a+n)a_n(W_0)b_m+(a+bm)a_n^\prime(W_0) a_m(W_0).
\end{eqnarray}Setting $m=0$ in the equation above one  has $$aa_n^\prime(W_0)W_0=0,$$ by noticing  $a_0(W_0)=W_0$ and $b_0=1$.    Thus $a=0$.

\begin{clai}\label{clai22}
For all $m\in\Z$,  $b_m=\lambda^m$ for some $\lambda\in\C^*$.
\end{clai}
From $[L_m, L_n]\cdot1=(n-m)L_{m+n}\cdot1$, we have for all $m,n\in\Z$ that

\begin{eqnarray}\label{eq-l_ml_n}
\!\!\!\!\!\!
(n-m)\big(b_{m+n}L_0+d_{m+n}\big)&\!\!\!=\!\!\!&
(n-m)b_mb_nL_0+\Big(nb_md_n-mb_nd_m\nonumber\\
&\!\!\!\!\!\!\!\!\!\!\!\!\!\!\!\!\!\!\!\!\!\!\!\!&
-
nbd_m^\prime a_n(W_0)+mbd_n^\prime a_m(W_0)\Big).
\end{eqnarray}
By comparing the coefficients of $L_0$ of both sides of \eqref{eq-l_ml_n} and setting $m=1$ and $m=-1$, we respectively get
\begin{eqnarray*}
(n-1)b_{n+1}=(n-1)b_nb_1\quad {\rm and}\quad (n+1)b_{n-1}=(n+1)b_nb_{-1},
\end{eqnarray*} from which one has $b_n=\lambda^n$ $(\lambda:=b_1\in\C^*)$ for all $n\in\Z$. Thus the claim holds.

\begin{clai}\label{clai6}If $a_m(W_0)\neq 0$, then
${\rm deg}_{W_0}a_m(W_0)=1$.
 \end{clai}An important fact observed from \eqref{eq3} is that the degrees of $a_n(W_0)$ are bounded.  Assume that ${\rm deg}_{W_0}a_{m_0}(W_0)={\rm max}\{{\rm deg}_{W_0}a_{k}(W_0)\mid k\in\Z\}.$  Then we assert that ${\rm deg}_{W_0}a_{m_0}(W_0)\leq1$. If $m_0=0$, then we are done. For $m_0\neq0$, setting $n=m=m_0$ in \eqref{eq3} along with Claim \ref{clai22} gives rise to \begin{equation}\label{expofb_m}
\la^{m_0}=(1+b)\frac{a_{2m_0}(W_0)}{a_{m_0}(W_0)}-ba_{m_0}^{\prime}(W_0).
\end{equation}Then we must have $\frac{a_{2m_0}(W_0)}{a_{m_0}(W_0)}\in\C^*$ by the choice of $m_0$, which implies ${\rm deg}_{W_0}a_{m_0}(W_0)\leq1$ for $b\neq0$. While in the case  $b=0$,  \eqref{eq3} simply becomes $$na_{m+n}(W_0)=na_n(W_0)\la^m,$$ from which we obtain $a_n(W_0)=\la^nW_0$ for all $n\in\Z$. So in either case our assertion is true.  This allows us to assume that  \begin{equation}\label{eq-1}a_m(W_0)=A_mW_0+B_m, \end{equation}for some $A_m,B_m\in\C$. Substituting \eqref{eq-1} into \eqref{eq3} implies that
\begin{eqnarray}
\label{eqn1}(n+bm)A_{m+n}\!\!\!&=&\!\!\!nA_n\la^m+bmA_nA_m, \\
\label{eqn2} (n+bm)B_{m+n}\!\!\!&=&\!\!\!nB_n\la^m+bmA_nB_m, \quad \forall m,n\in\Z.
\end{eqnarray}

We next assert that $A_n\in\C^*$ for all $n\in\Z$. Suppose on the contrary that there exists some $n_0\in\Z^*$ such that $A_{n_0}=0$.  Letting $n=-m=n_0$ in \eqref{eqn1} along with the fact that $A_0=1$, we have $b=1$. Then it follows from   by setting $n=n_0$ in \eqref{eqn1} that $A_n=0$ for all $n\in\Z^*$. Setting $n+m=0$ in \eqref{eqn2}, then $A_n=0$ implies $B_n=0$ for all $n\in\Z^*$. Hence,  $a_n(W_0)=0$ for all $n\in\Z^*$,  contradicting the assumption of our claim. Now Claim \ref{clai6} follows immediately from the two assertions.

\begin{clai}\label{clai223}
For all $m\in\Z$,  $A_m=\lambda^m$, where $\lambda\in \C^*$ as in Claim \ref{clai22}.
\end{clai}
Letting $n=-m=-1$ and $n=-m=1$ in \eqref{eqn1} respectively, we obtain
\begin{eqnarray}\label{ass}
b-1=A_{-1}(b A_1-\lambda)\quad {\rm and}\quad b-1=A_1(b A_{-1}-\lambda^{-1}).
\end{eqnarray}
%\begin{eqnarray}
%\label{eqn11}&b-1=A_{-1}(b A_1-\lambda), \\
%\label{eqn22}&b-1=A_1(b A_{-1}-\lambda^{-1}).
%\end{eqnarray}
Equating the above two equations gives rise to $A_1\lambda^{-1}=A_{-1}\lambda$, i.e.,
\begin{equation}\label{eq=+-}
A_1=k \lambda,\, A_{-1}=k\lambda^{-1} \mbox { \ for some \ } k\in\C^*.
\end{equation}
Substituting \eqref{eq=+-} into one of the equations in \eqref{ass} implies $bk^2-k-(b-1)=0$, from which we obtain
\begin{equation*}
k=\left\{\begin{array}{llll}1&\mbox{if \ }b\in\{0, \frac{1}{2}\},\\[4pt]
1\, {\rm or\,} \,\frac{1-b}{b}&\mbox{otherwise\ }.
\end{array}\right.\end{equation*}
Indeed, $k\equiv1$, i.e., the case $k=\frac{1-b}{b}$ for some $b\not\in \{0, \frac{1}{2}\}$ can not occur. Suppose not, setting $n=m=1$ in \eqref{eqn1} along with \eqref{eq=+-} forces that
\begin{equation}\label{eq=+-1}b(1+b)A_2=(b-1)(b-2)\lambda^2.\end{equation}
But then $b=-1$ would give $\lambda=0$, contradicting  the choice of $\lambda$; while $b\neq -1$ would give rise to $b=\frac12$ by taking $n=-1, m=2$ in
\eqref{eqn1} and using \eqref{eq=+-1}, a contradiction with  $b\notin\{0,\frac12\}$.

Now \eqref{eq=+-} turns out to be
\begin{equation}\label{A_1A_{-1}}  A_1=\lambda, A_{-1}=\lambda^{-1}
\end{equation} due to $k=1$. Taking $n=1$ and $n=-1$ in \eqref{eqn1} respectively, we have
\begin{eqnarray}
\label{eqn11az}&(bm+1)A_{m+1}=\lambda^{m+1}+bm\lambda A_{m}, \\
\label{eqn22az}&(bm-1)A_{m-1}=-\lambda^{m-1}+bm\lambda^{-1}A_{m}.
\end{eqnarray}
If $1+bm\neq0$ for all $m\in\Z$, then \eqref{eqn11az} together with \eqref{A_1A_{-1}} gives  $A_m=\lambda^m$ for any $m\in\Z$. Otherwise, assume $b=-\frac{1}{m_0}$ for some $m_0\in\Z^*$. Consider the case $m_0>0$. From \eqref{eqn11az} we have $A_{m}=\lambda^m$ for all $m\in\Z$ and $m<1+m_0$. Letting $m=m_0+1$ in \eqref{eqn22az}, we have $(b-2)A_{m_0}=-\lambda^{m_0}+(b-1)\lambda^{-1}A_{1+m_0}$,  which forces $A_{1+m_0}=\lambda^{m_0+1}$. By \eqref{eqn11az} again we obtain
$A_m=\lambda^m$ for all $m\in\Z$. Similarly one can show that the claim is also true for the case $m_0<0$.

\begin{clai}\label{clai7}If $a_m(W_0)\neq 0$ for all $m\in\Z$, then
$a_m(W_0)=\lambda^m\big(W_0-\delta_{b,-1}m\al-\delta_{b,1}(1-\delta_{m,0})\al\big)$ for some  $\al\in\C$.
\end{clai}
Combining Claim \ref{clai22} with \eqref{eqn2} one has
\begin{equation}\label{eq=+--1}(n+bm)B_{m+n}=n\lambda^m B_n+bm\lambda^n B_m\quad {\rm for\ all}\ m,n\in\Z.\end{equation} Setting $n=-m=-1$ and $n=-m=1$ in \eqref{eq=+--1} respectively, we obtain
$$B_{-1}\lambda=b\lambda^{-1}B_{1}\quad {\rm and}\quad b B_{-1}\lambda=\lambda^{-1}B_{1},$$
from which we get $B_1\lambda^{-1}(b^2-1)=0$. There are three cases to be considered.
\begin{case} $b\neq \pm1$. \end{case}

In this case, we have $B_1=B_{-1}=0$. In fact,  $B_{m}=0$ for all $m\in \Z$. To see this,  taking $n=1$ and $n=-1$ in \eqref{eq=+--1} respectively, we have
\begin{eqnarray}
\label{eqn11a}(bm+1)B_{m+1}\!\!\! &=&\!\!\! bm\lambda B_{m}, \\
\label{eqn22a}(bm-1)B_{m-1}\!\!\! &=&\!\!\! bm\lambda^{-1}B_{m}, \quad \forall m\in\Z.
\end{eqnarray}
 Then either relation above would give $B_m=0$ for all $m\in\Z$ if  $1+bm\neq0$ for all $m\in\Z$. Without loss of generality, we only  consider $b=-\frac{1}{m_0}$ for some  positive integer $m_0$.
It follows from  \eqref{eqn11a} and  $B_1=0$ that one can inductively show $B_{m}=0$ for all $1\le m<1+m_0$. While $B_{1+m_0}=0$ can be obtained immediately from  by taking $m=m_0+1$ in \eqref{eqn22a}. But this and \eqref{eqn11a} give rise to $B_{m}=0$ for all $m\ge 1+m_0$. Hence, $B_m=0$ for all $m\in\Z$, since $B_m=0$ for all negative integers can be inferred from \eqref{eqn22a}.
\begin{case} $b=1$. \end{case}
In this case, from \eqref{eq=+--1} one can show by induction that $B_m=(1-\delta_{m,0})B_1 \la^{m-1}$ for $m\in\Z$.
\begin{case} $b=-1$. \end{case}
Inductively,  we have $B_m=mB_1 \la^{m-1}$ for all $m\in\Z$  by \eqref{eq=+--1}.

\bigskip
The three cases above can be summarized as follows:
\begin{equation*}
a_m(W_0)=\left\{\begin{array}{llll}\lambda^m W_0+(1-\delta_{m,0})B_1 \la^{m-1}&\mbox{if \ }b=1,\\[4pt]
\lambda^m W_0+mB_1 \la^{m-1}&\mbox{if \ }b=-1,\\[4pt]
\lambda^m W_0&\mbox{otherwise\ }.
\end{array}\right.\end{equation*}
Whence Claim \ref{clai7} follows by setting $\al=-\frac{B_1}{\lambda}$.

It now follows from \eqref{eqq} and Claim \ref{clai22} that $g_n(L_0, W_0)=\lambda^n L_0+d_n$ for all $n\in\Z$. So it remains to determine the expressions of these $d_n$.
\begin{clai}\label{clai8}
\begin{equation*}
g_n(L_0, W_0)=\left\{\begin{array}{llll}\la^n\big(L_0+r_n(W_0)\big)&\mbox{\rm if \ }(a,b)=(0,1)\ {\rm and}\ a_n(W_0)=\delta_{n,0}W_0,\\[4pt]
\la^n\big(L_0+h_n(W_0)\big)&\mbox{\rm if \ } (a,b)=(0,1)\ {\rm and}\ a_n(W_0)\neq0,\\[4pt]
\la^n\big(L_0+h_n(W_0)\big) &\mbox{\rm if \ }(a,b)\neq(0,1)
\end{array}\right.\end{equation*}
for some  $\big(h_n(t)\big)_{n\in\Z}\in\mathcal{H}_{\al}$ and $\big(r_n(t)\big)_{n\in\Z}\in\mathcal H_0$ {\rm (cf. \eqref{noas})}.
\end{clai}

It follows from Claim \ref{clai22} and  by comparing monomials concerning $W_0$ in \eqref{eq-l_ml_n} that
\begin{equation}\label{eq=+--1f}n\la^m d_n-m\la^n d_m-nbd_m^\prime a_n(W_0)+mbd_n^\prime a_m(W_0)=(n-m)d_{m+n}.\end{equation}
Denote $F_m=\la^{-m}d_{m}\in\C[W_0].$

Now  we assume that $a_m(W_0)\neq 0$ for all $m\in\Z$.    Then  by Claim \ref{clai7},  \eqref{eq=+--1f} can be rewritten as
\begin{eqnarray}\label{ggg}
\!\!\!\!\!\!
(n-m)F_{m+n}&\!\!\!=\!\!\!&
nF_n-mF_m-nbF_m^\prime\big(W_0-\delta_{b,-1}n\al-\delta_{b,1}(1-\delta_{n,0})\al\big)\nonumber\\
&\!\!\!\!\!\!\!\!\!\!\!\!\!\!\!\!\!\!\!\!\!\!\!\!&
+
mbF_n^\prime\big(W_0-\delta_{b,-1}m\al-\delta_{b,1}(1-\delta_{m,0})\al\big).
\end{eqnarray}
 It is important to observe  that  degrees of all $F_n$ are bounded. Let $k$ be a nonnegative integer such that ${\rm deg\,}F_n\leq k$ for all $n\in\Z$.
Let $f_n$ be the coefficient of $W_0^k$ of $F_n$. By Claim 7 of \cite{CG}, we see that $f_n=nf_1$ for all $n\in\Z$. It follows from Proposition \ref{Pro}  that all $q_{n,k;\al}$ in \eqref{ew} satisfy \eqref{ggg}.
Then by the construction of $q_{n,k;\al}$ there exists ${h}^{(k)}\in\C$ such that ${\rm deg}_{W_0}(F_n-{h}^{(k)}q_{n,k;\al})\leq k-1$ for all $n\in\Z$. Observe that $F_n-{h}^{(k)}q_{n,k;\al}$ for all $n\in\Z$ also satisfy \eqref{ggg}. Replacing $F_n$ with $F_n-{h}^{(k)}q_{n,k;\al}$ and repeating the above process, we can find ${h}^{(1)}, {h}^{(2)},...,{h}^{(k-1)}\in\C$ such that $F_n-\sum_{i=1}^k{h}^{(i)}q_{n,i;\al}$ are constants and satisfy \eqref{ggg}, that is,
$$F_n-\sum_{i=1}^k{h}^{(i)}q_{n,i;\al}={h}^{(0)} q_{n,0;\al}$$
for some ${h}^{(0)}\in\C$. Hence,  $F_n=\sum_{i=0}^k{h}^{(i)}q_{n,i;\al}$.

Similarly, in  the case  $a_m(W_0)=\delta_{m,0}W_0$ for any $m\in\Z$ one can show that there exists $\big(r_n(t)\big)_{n\in\Z}\in\mathcal H_0$ such that $$g_n(L_0, W_0)=\la^n\big(L_0+r_n(W_0)\big)$$ for all $n\in\Z$.
This completes the proof of Claim \ref{clai8} and then Theorem \ref{theo1}.\end{proof}

When $\C[s,t]$ is considered as a $\V ir(a,b)$-module, i.e., taking the central elements $C_i$ into account, it can be showed by the similar arguments as above, but whose proof is a little more complicated than that of Theorem \ref{theo1}, that $C_i=0$ on $\C[s,t]$. So   the following result also holds.

%If there exists  a $\V ir(a,b)$-module $M$ such that it is a free  $U(\C L_0 \oplus\C W_0)$-module of  rank $1$, then it follows from the proof of Theorem \ref{theo1} we can, in fact, show that $C_i=0$ on $M$ for all $i$.

\begin{theo}Suppose that there exists  a $\V ir(a,b)$-module $M$ such that it is a free  $U(\C L_0 \oplus\C W_0)$-module of
 rank $1$. Then $a=0$, $M\cong \Phi(\lambda,\al,{\bf h})$ or $\Theta(\lambda,\bf {r})$ if $b=1$, and
 $M\cong\Phi(\lambda,\al,{\bf h})$  if $b\neq 1$ for some $\alpha\in\C, \lambda\in \C^*$ and ${\bf h}=\big(h_n(t)\big)_{n\in\Z}\in\mathcal{H}_\alpha$,\,${\bf r}=\big(r_n(t)\big)_{n\in\Z}\in \mathcal{H}_0$.
\end{theo}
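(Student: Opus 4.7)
The plan is to reduce to Theorem \ref{theo1} by proving that every central generator $C_i$ of $\V ir(a,b)$ acts as zero on $M$. Once this holds, the $\V ir(a,b)$-action on $M$ descends to $\W(a,b)$, and Theorem \ref{theo1} applies directly.

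Since each $C_i$ commutes with $L_0$ and $W_0$, and $M$ is free of rank one over $U(\C L_0\oplus\C W_0)$ generated by $1$, the action of $C_i$ on $M$ is right-multiplication by the element $c_i:=C_i\cdot 1\in U(\C L_0\oplus\C W_0)$. I then replay Claims \ref{claim1}--\ref{clai8} of Theorem \ref{theo1} inside $\V ir(a,b)$. Every claim there rests on commutator relations $[L_m,L_n]$, $[L_m,W_n]$ or $[W_m,W_n]$ with free parameters $m,n$, and the central extension perturbs only the $\delta_{m+n,0}$ (respectively $\delta_{m+n,-1}$) part of those brackets. By arranging $m+n\neq 0$ (respectively $\neq -1$) at every step, no $c_i$ enters the picture and one recovers verbatim $a=0$, the explicit form of $a_m(W_0)$ from Claim \ref{clai7} (or $\delta_{m,0}W_0$ in the $\Theta$-variant), and $g_n(L_0,W_0)=\lambda^n(L_0+h_n(W_0))$ with $\mathbf{h}\in\mathcal{H}_\al$ (respectively $\mathbf{r}\in\mathcal{H}_0$).

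With these formulas pinned down, I apply each centrally-modified bracket $[X_m,Y_{-m}]$ (for $X,Y\in\{L,W\}$, and also $[W_m,W_{-m-1}]$ in the $\V ir(\frac{1}{2},0)$ case) to the generator $1$. By Proposition \ref{Pro}, the very same formulas already define a genuine $\V ir(a,b)$-module on $\C[s,t]$ with every $C_i$ acting as zero, so the direct evaluation of the left-hand side from those formulas exactly reproduces the $\W(a,b)$-part of the right-hand side. Consequently the central contribution on $1$ must vanish, yielding equations of the form $\frac{m^3-m}{12}c_1=0$, $mc_3=0$, and case-dependent analogues for $c_2$ and $c_4$. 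Varying the free parameter $m$ then forces $c_i=0$ for every $i$, completing the reduction.

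The main obstacle is the bookkeeping in the preceding step: verifying that each claim of Theorem \ref{theo1} can actually be reproduced using commutators whose parameters do not sum to $0$ (or $-1$). Most claims use generic $m,n$ directly, but at the one genuinely unavoidable spot — the step in Claim \ref{claim3} where one brackets $L_{-m-n}$ with $W_{m+n}$ to force $W_0\cdot M=0$ — one must argue separately that the central correction, which takes the form $(\text{scalar})\cdot c_i$ in $U(\C L_0\oplus\C W_0)$, cannot, as $n$ ranges over the infinite set $I_m$ with varying scalar coefficient of $W_0$, consistently offset the required non-vanishing of $W_0$ acting on $M$. This preserves the original contradiction, after which the remainder of the reduction proceeds as above.
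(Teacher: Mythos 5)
Your reduction strategy --- show that each $C_i$ acts by the element $c_i:=C_i\cdot 1\in\C[L_0,W_0]$ and then invoke Theorem \ref{theo1} --- is the route the paper itself gestures at (the paper offers only the remark that ``similar arguments'' show $C_i=0$), and your two structural observations are correct: $C_i$ acts by multiplication by $c_i$, and the cocycles are supported on $m+n=0$ (or $m+n=-1$). The gap is in the pivotal assertion that Claims \ref{claim1}--\ref{clai8} can be reproduced verbatim ``by arranging $m+n\neq 0$ at every step.'' This fails at more places than the single spot you flag in Claim \ref{claim3}: the substitutions that actually pin down the structure constants are precisely at $m+n=0$. Equation \eqref{ass} in Claim \ref{clai223} is \eqref{eqn1} with $n=-m=\pm1$; the two displayed equations following \eqref{eq=+--1} in Claim \ref{clai7} are again $n=-m=\pm1$; and the $b=0$ subcase of Claim \ref{clai6} deduces $a_n(W_0)=\la^nW_0$ from $na_{m+n}=na_n\la^m$ only by taking $m=-n$ so as to reach $a_0=W_0$. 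If one discards all $m+n=0$ instances, the remaining relations genuinely underdetermine the module: for $b=0$, \eqref{eqn1} restricted to $m+n\neq 0$ yields only $A_k=\kappa\la^{k}$ for $k\neq 0$ with $\kappa:=A_1\la^{-1}$ a free parameter, and likewise $B_k=B_1\la^{k-1}$ with $B_1$ free. Consequently your final step collapses: phase one does not output exactly the formulas of $\Phi(\la,\al,{\bf h})$ or $\Theta(\la,{\bf r})$, so Proposition \ref{Pro} does not guarantee that the $\W(a,b)$-part of $[X_m,Y_{-m}]\cdot 1$ balances, and you cannot conclude that the central contribution alone must vanish.

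The repair is to accept the entanglement rather than separate the two phases: evaluate the $m+n=0$ brackets with the unknown $c_i$ carried along. In $\V ir(0,0)$, for instance, $[L_m,W_{-m}]\cdot 1$ gives $-m\la^{m}a_{-m}(W_0)=-mW_0+(m^2+m)c_2$, and the $m$-dependence of this family of equations forces both $c_2=0$ and the missing normalisations $\kappa=1$, $B_1=0$ simultaneously; the $C_1$- and $C_3$-equations then do fall out by varying $m$ as you describe. Two further points should be recorded: in $\V ir(0,1)$ the hypothesis that $M$ is a module over the commutative algebra $U(\C L_0\oplus\C W_0)$ already forces $c_4=[L_0,W_0]\cdot 1=0$; and in $\V ir(\frac12,0)$ Claim \ref{claim1} itself acquires a correction term in $W_{-1}W_0^{i}$ coming from $[W_{-1},W_0]=-C_3$, which must be handled before one can run the argument excluding $a=\frac12$. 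So the architecture of your proof is right and matches the authors' intent, but the middle of the argument as written has a genuine gap.
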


%By Claim \ref{clai6} we may assume that \begin{equation}\label{eq--1}a_m(W_0)=A_mW_0+B_m, \end{equation}for some $A_m,B_m\in\C$. Substituting \eqref{eq-1} into \eqref{eq3} along with Claim \ref{clai7} imply that
%\begin{eqnarray}
%\label{eqn1}&(n+bm)A_{m+n}=nA_n\lambda^m+bmA_nA_m, \\
%\label{eqn2}& (n+bm)B_{m+n}=nB_n\lambda^m+bmA_nB_m,
%\end{eqnarray} for all $m,n\in\Z$.
%\begin{clai}\label{clai8}
%We have $A_n\in\C^*$ for all $n\in\Z$.
%\end{clai}

%\begin{eqnarray}\label{eq-l_mw_n}
%&&(n+bm)(A_{n+m}W_0+B_{n+m})v\nonumber\\ &=&[L_m,W_n]v=L_mW_nv-W_nL_mv\nonumber\\
%&=&L_m(A_nW_0+B_n)v-W_n(b_mL_0+d_m(W_0))v\nonumber\\
%&=&A_nW_0\big(b_mL_0+d_m(W_0)\big)v+mbA_n(A_mW_0+B_m)v+B_n\big(b_mL_0+d_m(W_0)\big)v\nonumber\\ &&-b_m(L_0-n)(A_nW_0+B_n)v-d_m(W_0)(A_nW_0+B_n)v\nonumber\\
%&=&(mbA_nA_m+nb_mA_n)W_0v+(mbA_nB_m+nb_mB_n)v
%\end{eqnarray}

\end{CJK*}
\end{document}